\newtheorem{teo}{Theorem}
\newtheorem{defi}{Definition}
\newcommand{\lam}{$\lambda$}
\begin{document}

\title{Some regional control problems for population dynamics}
\author{Laura-Iulia Ani\c{t}a\thanks{Faculty of Physics, ``Alexandru Ioan Cuza'' University of Ia\c{s}i, Ia\c{s}i 700506, Romania, E-mail: lianita@uaic.ro}
\and Sebastian Ani\c{t}a\thanks{Faculty of Mathematics, ``Alexandru Ioan Cuza'' University of Ia\c{s}i, and ``Octav Mayer'' Institute of Mathematics of the Romanian Academy, Ia\c{s}i 700506, Romania, Corresponding author, E-mail: sanita@uaic.ro}
\and Vincenzo Capasso\thanks{ADAMSS (Centre for Advanced Applied Mathematical and Statistical Sciences),  and  Universit\'a degli Studi di Milano, 20133 Milano, Italy, E-mail: vincenzo.capasso@unimi.it}
\and Ana-Maria Mo\c sneagu\thanks{Faculty of Mathematics, ``Alexandru Ioan Cuza'' University of Ia\c{s}i, Ia\c{s}i 700506, Romania, E-mail: anamaria.mosneagu@uaic.ro}
}
\date{}
\maketitle
\begin{abstract}
This paper deals with some control problems related to structured population dynamics with diffusion.
Firstly, we investigate the regional control for an optimal harvesting problem (the control acts in a subregion $\omega$ of the whole domain $\Omega$). Using the necessary optimality conditions, for a fixed $\omega$, we get the structure of the harvesting effort which gives the maximum harvest; with this optimal effort we investigate the best choice of the subregion $\omega$ in order to maximize the harvest. We introduce an iterative numerical method to increase the total harvest at each iteration by changing the subregion where the effort acts. Numerical tests are used to illustrate the effectiveness of the theoretical results. We also consider the problem of eradication of an age-structured pest population dynamics with diffusion and logistic term, which is a zero-stabilization problem with constraints. We derive a necessary condition and a sufficient condition for zero-stabilizability. We formulate a related optimal control problem which takes into account the cost of intervention in the subregion $\omega$.
\end{abstract}
\noindent
\textbf{Keywords}  Optimal harvesting; population dynamics; diffusive models; regional control; numerical methods.
\section{Introduction}
An extensive literature was developed for the optimal harvesting
problems of population dynamics (e.g. \cite{AAV}--\cite{ACM},
\cite{AM}, \cite{VB2}, \cite{bv}, \cite{fl}, \cite{gm1}--\cite{h},
\cite{hy}, \cite{l2}--\cite{ms}, \cite{zwz2}, \cite{zzw}). In this
paper we firstly remind an optimal harvesting problem for a
spatially structured population with diffusion which has been
introduced in \cite{ACM}. For spatially structured harvesting
problems it has  usually been  taken  into consideration an effort
that acts in the whole habitat $\Omega$ (see  for example
\cite{anita}). Instead here we consider the case  in which  the
effort is localized in a suitably chosen subregion $\omega$ of
$\Omega.$  In addition to the problem of finding the magnitude of
of the control   to act on a given subdomain  $\omega,$  the most
important task will be to identify an optimal  subregion $\omega$,
where the control acts, in order to maximize the harvest. To this
aim, at first we have derived necessary optimality conditions for
the situation when the support of the control is fixed; as a  fall
out we  have  obtained  information concerning the structure of
the optimal control. Hence we  have taken into account this
structure  to investigate the optimal subregion $\omega$ where the
control is localized, by taking into account the cost paid for
harvesting in $\omega$. Here we have adapted some shape
optimization methods, based on the level set method. These results
have been previously presented in \cite{ACM}. In this paper we
consider also the problem of eradication of an age-structured pest
population dynamics with diffusion and logistic term. We consider
a related optimal control problem which can be again investigated
by means of  the level set method.

We consider the following population dynamics model with
diffusion. A  single population species is free to move in an
isolated habitat $\Omega \subset {\mathbb{R}^2}$, with $\Omega $ a
bounded domain with a sufficiently smooth boundary:
\begin{equation}
\label{a}
\left\{ \begin{array}{ll}
\partial_t y(x,t)- d\Delta y(x,t) = a(x) y(x,t) - \chi_{\omega }(x)u(x,t)y(x,t), &(x,t)\in Q_T \\
\partial _{\nu } y(x,t)=0, &(x,t)\in \Sigma_T\\
y(x,0)=y_0(x), &x\in \Omega, \\
\end{array}
\right.
\end{equation}
where $Q_T=\Omega \times (0,T)$, $\Sigma_T=\partial\Omega \times
(0,T)$, $T>0$, $y=y(x,t)$ is the population density at position
$x\in \overline{\Omega }$ and time $t\in [0,T]$, while $y_{0}(x)$
is the initial population density. Here $a(x)$ denotes the natural
growth rate of the population, and $d\in (0,+\infty)$ is the
diffusion coefficient. No-flux boundary conditions are considered.

In System  \eqref{a}, $u(x,t)$ represents the harvesting effort
(control), bounded and localized in the subdomain $\omega \subset
\Omega $ ($\chi _{\omega }$ is the characteristic function of
$\omega $). The term $\chi _{\omega }(x)u(x,t)y(x,t)$ represents
the rate of the harvested population at position $x\in\omega$ and
time $t\in[0,T]$.

The following hypotheses are considered:
\begin{description}
\item[{\bf (H1)}] $a\in L^{\infty}(\Omega)$;
\item[{\bf (H2)}] $y_{0}\in L^{\infty}(\Omega), \quad y_{0}(x)\geqslant 0 \quad \mbox{a.e.} \ x \in \Omega\,$ with $\,\|y_0\|_{L^{\infty}(\Omega)}>0$.
\end{description}
We consider a related optimal harvesting problem
\begin{equation}\label{OH}Maximize\int_{0}^{T}\int_{\omega} u(x,t)y^{u}(x,t)dx\ dt ,\end{equation}
subject to $u\in K_{\omega}$, where $K_{\omega}=\{ w\in L^{\infty}(\omega\times (0,T));\ 0\leq w(x,t)\leq L \ \ \mbox{\rm a.e. in } \omega \times (0,T)\}$. Here $L>0$ is a constant and $y^{u}$ is the solution to \eqref{a} corresponding to a harvesting effort $u\in K_{\omega}$.

The existence result of an optimal control for Problem (\ref{OH})
follows \cite{AAC} or \cite{AM}.
\begin{teo} Problem (\ref{OH}) admits at least one optimal control.
\end{teo}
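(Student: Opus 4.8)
The plan is to use the direct method of the calculus of variations, exploiting that for fixed $u\in K_\omega$ the state problem \eqref{a} is linear in $y$, so that $y^u$ exists, is unique, and enjoys the standard parabolic estimates. First I would show that the cost functional $J(u):=\int_0^T\int_\omega u(x,t)\,y^u(x,t)\,dx\,dt$ is bounded from above on $K_\omega$. Since $u\geq 0$ a.e., the harvesting term $-\chi_\omega u\,y^u$ has a sign, so the comparison principle gives $0\leq y^u\leq\bar y$ a.e.\ in $Q_T$, where $\bar y$ solves \eqref{a} with $u\equiv 0$; by (H1)--(H2) one has $\|\bar y\|_{L^\infty(Q_T)}\leq\|y_0\|_{L^\infty(\Omega)}e^{\|a\|_{L^\infty(\Omega)}T}=:M$. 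Hence $0\leq J(u)\leq L\,M\,|\omega|\,T$ for every $u\in K_\omega$, and $d^{*}:=\sup_{u\in K_\omega}J(u)$ is a finite nonnegative number.

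Next I would pick a maximizing sequence $\{u_n\}_n\subset K_\omega$ with $J(u_n)\to d^{*}$ and set $y_n:=y^{u_n}$. Since $K_\omega$ is bounded, convex and closed in $L^{2}(\omega\times(0,T))$, it is weakly closed, so along a subsequence $u_n\rightharpoonup u^{*}$ in $L^{2}(\omega\times(0,T))$ with $u^{*}\in K_\omega$. For the states, I would derive the usual energy estimate from \eqref{a} (multiply by $y_n$, integrate, use $0\leq u_n\leq L$ and $0\leq y_n\leq M$): this yields $\{y_n\}$ bounded in $L^{2}(0,T;H^{1}(\Omega))\cap L^{\infty}(Q_T)$ and $\{\partial_t y_n\}$ bounded in $L^{2}(0,T;(H^{1}(\Omega))')$. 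By the Aubin--Lions--Simon compactness lemma, a further subsequence satisfies $y_n\to y^{*}$ strongly in $L^{2}(Q_T)$ (and a.e.\ in $Q_T$) and weakly in $L^{2}(0,T;H^{1}(\Omega))$.

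Then I would pass to the limit in the weak formulation of \eqref{a}. The linear terms pass to the limit immediately; the only delicate term is the bilinear harvesting term $\chi_\omega u_n y_n$, where $u_n$ converges only weakly. Here I would use the weak--strong product argument: $u_n\rightharpoonup u^{*}$ in $L^{2}$ and $y_n\to y^{*}$ strongly in $L^{2}$ imply $\chi_\omega u_n y_n\to\chi_\omega u^{*}y^{*}$ in $\mathcal{D}'(Q_T)$, which is enough to identify the limit equation; by uniqueness for the linear problem \eqref{a} this forces $y^{*}=y^{u^{*}}$. The same weak--strong argument then gives $J(u_n)=\int_0^T\int_\omega u_n y_n\,dx\,dt\to\int_0^T\int_\omega u^{*}y^{u^{*}}\,dx\,dt=J(u^{*})$, hence $J(u^{*})=d^{*}$ and $u^{*}$ is an optimal control for Problem \eqref{OH}.

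I expect the main obstacle to be exactly the passage to the limit in the bilinear term $\chi_\omega u_n y_n$: weak convergence of the controls alone does not suffice, so the real work is the a priori estimates on $\{y_n\}$ and $\{\partial_t y_n\}$ needed to invoke Aubin--Lions and obtain strong $L^{2}(Q_T)$-compactness of the states; once this is available, the same strong convergence simultaneously closes the state equation and handles the functional. Everything else (well-posedness and nonnegativity of $y^u$, weak closedness of $K_\omega$, boundedness of $J$) is routine and can be quoted from \cite{AAC} or \cite{AM}.
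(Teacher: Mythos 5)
Your proof is correct: the paper gives no argument for this theorem at all, referring the reader to \cite{AAC} and \cite{AM}, and the direct method you outline (uniform $L^\infty$ bound on the states by comparison, a maximizing sequence with $u_n\rightharpoonup u^*$ weakly in $L^2$, Aubin--Lions compactness for $\{y_n\}$, and the weak--strong product argument to close both the state equation and the cost) is essentially the argument used in those references. The only cosmetic difference is that the cited proofs typically first integrate \eqref{a} over $Q_T$ to rewrite the harvest as $\int_0^T\!\!\int_\omega u\,y^u\,dx\,dt=\int_\Omega y_0\,dx-\int_\Omega y^u(\cdot,T)\,dx+\int_0^T\!\!\int_\Omega a\,y^u\,dx\,dt$, making the payoff linear in the state, whereas your weak--strong convergence handles the bilinear term directly; both routes rest on the same strong $L^2(Q_T)$ compactness of the states, which you correctly identify as the crux.
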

We denote by $p$ the adjoint state, i.e. $p$ satisfies
\begin{equation}\left\{ \begin{array}{ll}
      \partial_t p(x,t) + d\Delta p(x,t) = -a(x) p(x,t)\\
       ~~~~~~~~~~~~~~~~~~~~~~~~~~~~~~+\chi_{\omega}(x)u^{*}(x,t)(1+p(x,t)) ,  \quad &(x,t)\in Q_T \\
\partial _{\nu } p(x,t)=0, \quad &(x,t)\in \Sigma_T \\
p (x,T)=0, \quad &x\in  \Omega, \\
           \end{array}
    \right. \label{f}\end{equation}
where $(u^{*},y^{u^{*}})$ is an optimal pair for (\ref{OH}). For the construction of the adjoint problems in optimal control theory we
refer to \cite{VB2}. Concerning  the first order necessary optimality conditions it can be proved the following result (as in \cite{AAC} and \cite{AM}):
\begin{teo} \label{th1}If $(u^{*},y^{u^{*}})$ is an optimal
pair for Problem (\ref{OH}) and if $p$ is the solution of Problem
(\ref{f}), then we have:
$$ u^{*}(x,t)=\left\{
\begin{array}{l l}
0, &\quad 1+p(x,t) < 0 \\
L, & \quad 1+p(x,t) > 0
\end{array} \right. \quad \mbox{a.e.} \ (x,t) \in \omega \times (0,T).$$
\end{teo}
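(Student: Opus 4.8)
The plan is to follow the classical variational argument for control problems with a convex admissible set, deriving the pointwise (bang-bang) structure from the sign of a first-order variation. Since $K_\omega$ is convex, for any $v\in K_\omega$ and $\varepsilon\in(0,1]$ the control $u^\varepsilon:=u^*+\varepsilon(v-u^*)$ lies in $K_\omega$; writing $y^\varepsilon:=y^{u^\varepsilon}$ for the corresponding state and using that $(u^*,y^{u^*})$ is optimal for \eqref{OH}, we obtain
\[
\int_0^T\!\!\int_\omega\big(u^\varepsilon y^\varepsilon-u^*y^{u^*}\big)\,dx\,dt\le 0 .
\]
First I would show that $\varepsilon\mapsto y^\varepsilon$ is differentiable at $\varepsilon=0^+$ in, say, $C([0,T];L^2(\Omega))\cap L^2(0,T;H^1(\Omega))$: the difference quotients $z^\varepsilon:=(y^\varepsilon-y^{u^*})/\varepsilon$ solve a linear parabolic problem whose coefficients are bounded uniformly in $\varepsilon$ (by (H1) and the bound $L$ on the controls), so standard a priori estimates give $z^\varepsilon\to z$, where $z$ solves the linearized system
\[
\left\{
\begin{array}{ll}
\partial_t z-d\Delta z=a z-\chi_\omega u^* z-\chi_\omega(v-u^*)y^{u^*}, &(x,t)\in Q_T,\\
\partial_\nu z=0, &(x,t)\in\Sigma_T,\\
z(x,0)=0, &x\in\Omega .
\end{array}
\right.
\]
Dividing the cost inequality by $\varepsilon$ and letting $\varepsilon\to 0^+$ then yields
\[
\int_0^T\!\!\int_\omega(v-u^*)\,y^{u^*}\,dx\,dt+\int_0^T\!\!\int_\omega u^*\,z\,dx\,dt\le 0 .
\]

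The second step is to remove the linearized state $z$ by means of the adjoint state $p$. I would multiply the $z$-equation by $p$, integrate over $Q_T$, and integrate by parts both in time and in space; the spatial boundary terms vanish because $z$ and $p$ both satisfy homogeneous Neumann conditions, and the temporal endpoint terms vanish because $z(\cdot,0)=0$ and $p(\cdot,T)=0$. Using the adjoint equation \eqref{f} to substitute for $\partial_t p+d\Delta p$ and cancelling the terms common to both sides, one is left with the identity
\[
\int_0^T\!\!\int_\omega u^*\,z\,dx\,dt=\int_0^T\!\!\int_\omega(v-u^*)\,p\,y^{u^*}\,dx\,dt .
\]
Inserting this into the previous inequality gives the variational inequality
\[
\int_0^T\!\!\int_\omega\big(1+p(x,t)\big)\big(v(x,t)-u^*(x,t)\big)\,y^{u^*}(x,t)\,dx\,dt\le 0\qquad\text{for all }v\in K_\omega .
\]

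In the last step I would localize this inequality. Since $y_0\ge 0$, $y_0\not\equiv 0$ and $u^*\in K_\omega$ is bounded, the (strong) maximum principle applied to the linear parabolic equation satisfied by $y^{u^*}$ gives $y^{u^*}>0$ a.e. in $Q_T$. Then a pointwise comparison argument applies: if $1+p>0$ and $u^*<L$ on a subset $E$ of $\omega\times(0,T)$ of positive measure, the choice $v:=L$ on $E$ and $v:=u^*$ elsewhere makes the left-hand side of the variational inequality strictly positive, a contradiction; symmetrically, $1+p<0$ together with $u^*>0$ on a set of positive measure is excluded by taking $v:=0$ there. Hence $u^*=L$ a.e. on $\{1+p>0\}$ and $u^*=0$ a.e. on $\{1+p<0\}$, which is the asserted characterization.

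I expect the only genuinely delicate point to be the first step: rigorously establishing the Gâteaux differentiability of $u\mapsto y^u$ together with the uniform-in-$\varepsilon$ estimates that license passage to the limit in the difference quotients, and, relatedly, choosing the function spaces for $y$ and $p$ in which the integration-by-parts identity of the second step is fully justified. These are, however, classical facts for linear parabolic equations with $L^\infty$ coefficients, so the argument should go through along the lines of \cite{AAC} and \cite{AM}, as indicated by the authors.
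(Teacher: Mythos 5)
Your proposal is correct and follows exactly the classical variational/adjoint argument (convex perturbations $u^*+\varepsilon(v-u^*)$, G\^ateaux differentiability of the control-to-state map, elimination of the linearized state via the adjoint equation, and pointwise localization using the strict positivity of $y^{u^*}$ from the maximum principle), which is precisely the approach the paper itself defers to by citing \cite{AAC} and \cite{AM} rather than giving a proof. The duality computation and the sign analysis both check out, so no further comparison is needed.
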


In Section \ref{optimal_region}  we will treat the regional
harvesting problem as a shape optimization problem. We remind that
the geometry of a set $\omega$ can be characterized in terms of
its Minkowski functionals. There are three such functionals and
these are proportional to the area, the  perimeter and the
Euler-Poincar\'{e} characteristic. In this paper we control the
shape of $\omega$ as follows: by the length of the
boundary of  $\omega$, and  by the area of $\omega$.

We shall use the implicit interface representation to control the
shape of the 2D domain $\omega$. Therefore, the boundary of a
domain is defined as the isocontour of some  function $\varphi$
(see \cite{DZ} or \cite{OF}).  By using the level set method, we
introduce a  level set function $\varphi: \overline{\Omega}
\rightarrow \mathbb{R}$ such that $ \omega =  \{x \in  \Omega ; \
\varphi(x) > 0\ \mbox{\rm a.e.}\} $ and $\partial \omega =\{x\in
\Omega: \varphi(x) = 0 \ \mbox{\rm a.e.}\}$ (the boundary is
defined as the  zero level set of $\varphi$).  We will then
manipulate $\omega$ implicitly, through the function $\varphi$.
This function $\varphi$ is assumed to take positive values inside
the region delimited by the curve $\partial \omega$ and negative
values outside.

If  $\varphi$  is the implicit function of $\omega,$ in order  to
integrate over $\omega$ a function $f$ defined over the whole
$\Omega$ we may write $\int_{\Omega} f(x) H(\varphi (x)) dx,$
where  we have used the Heaviside function $H: {\mathbb{R}}
\rightarrow \{ 0,1\},$ such that
$$H(z)=\left\{
\begin{array}{ll}
1, \quad  \textnormal{if} \quad z \geq 0\\
0, \quad  \textnormal{if} \quad  z < 0. \\
\end{array}
\right. $$ If $\varphi $ is sufficiently smooth,  the directional
derivative of the Heaviside function in the normal direction at a
point  $x \in
\partial \omega$ is  given by $H'(\varphi(x))|\nabla \varphi(x)|,
$ and by using the usual Dirac  Delta $\delta$ on $\mathbb{R}$,
we have $\delta(\varphi(x))|\nabla \varphi(x)|$. If we need to
integrate over $\partial\omega$ a function $f$ defined over the
whole $\Omega$  we may write $\int_{\Omega} f(x) \delta (\varphi
(x)) |\nabla \varphi(x)| dx$.

We find the derivative of the optimal cost value with respect to
the implicit function $\varphi$ of  the subregion $\omega$. In
order to improve the region where the control acts we derive a
conceptual iterative algorithm based on these theoretical results.
We also present the numerical implementation of this  conceptual
algorithm and some numerical tests. Basically, the theoretical
results in Section 2 have been obtained in \cite{ACM}. Here we
give some additional details concerning the numerical scheme and
its implementation. Further  we present here some new numerical
tests.

In Section 3 we treat the problem of eradication of an
age-structured pest population with diffusion, which is a
zero-stabilization problem with constraints. We derive a necessary
condition and a sufficient condition of zero-stabilization. We
consider a related optimal control problem which takes into
account the cost paid by acting in the subregion $\omega$. We
formulate this optimal control problem by means of the  level set
method. The results in this section are new.

\section{An iterative method to localize an optimal subdomain $\omega $ where the control
acts}  \label{optimal_region}

Here we intend to use the level set method in order to obtain the
optimal subregion $\omega$ where the control is localized.
Consider $\varphi: \overline{\Omega} \rightarrow \mathbb{R}$  the
implicit function of $\omega$, the subregion of $\Omega$ where the
control acts.

We rewrite the optimal control problem \eqref{OH} such that will include  both the  magnitude
of the harvesting effort $u \in K_{\omega }$,
and the choice of the subdomain  $\omega $ with respect to its implicit function $\varphi$:
$$\underset{\varphi} {Maximize} \ \underset{u \in  K_{\omega}}  {Maximize}\left\{ \int_0^T\int_{\omega }u(x,t)y^u(x,t)dx \ dt\right.$$
$$-\alpha\; length(\partial\omega) - \beta\; area(\omega)\},$$
where $y^{u}$ is the solution to \eqref{a} corresponding to a harvesting effort $u\in K_{\omega}$ and $\alpha, \beta$ are positive constants. $\alpha\;\text{length}(\partial \omega )+\beta\;\text{area}(\omega )$ represents the cost paid to
harvest in the subregion $\omega $.

By using De Giorgi's formula for the length (perimeter) of a set
and assuming  that $\varphi$ is sufficiently smooth , the optimal
problem becomes
$$\underset{\varphi} {Maximize} \ \underset{u \in  K_{\omega}}  {Maximize}\left\{  \int_0^T\int_{\omega }u(x,t)y^u(x,t)dx \ dt\right.$$
$$\left.-\alpha\int_{\Omega} \delta (\varphi (x)) |\nabla \varphi(x)| dx-\beta\int_{\Omega} H(\varphi (x)) dx\right\}.$$

We have now two maximization problems: firstly, for a fixed $\varphi$ (and implicitly, $\omega$) we have to find the structure of the harvesting effort which gives the maximum harvest, as a function of $\varphi$ (or $\omega$); secondly, using this structure of the optimal control we investigate the optimal choice of the subregion $\omega$ with respect to its implicit function $\varphi$ in order to maximize the harvest.

For any arbitrary but fixed $\varphi$, we denote by $(u_{\varphi}^{*},y_{\varphi}^{*})$ an optimal pair for the harvesting problem \eqref{OH}. Now we have to investigate the following optimal control problem:
$$\underset{\varphi} {Maximize}\,\left\{\int_{0}^{T}\int_{\omega} u_{\varphi}^{*}(x,t)y_{\varphi}^{*}(x,t)dx\ dt\right.$$$$\left.-\alpha\int_{\Omega} \delta (\varphi (x)) |\nabla \varphi(x)| dx-\beta\int_{\Omega} H(\varphi (x)) dx\right\},$$
where $y_{\varphi}^{*}$ is the solution to
$$\left\{ \begin{array}{ll}
      \partial_t y (x,t)- d\Delta y(x,t) = a(x) y(x,t) -H(\varphi (x))u_{\varphi}^{*}(x,t)y(x,t),  &(x,t)\in Q_T \\
\partial _{\nu } y(x,t)=0, \quad &(x,t)\in \Sigma_T \\
y (x,0)=y_0(x), \quad &x\in  \Omega. \\
           \end{array}
    \right. $$
Note that $H(\varphi)$ represents  the characteristic function of
$\omega$.

Assume that the hypotheses (H1, H2) are satisfied. We denote by $p_{\varphi}$ the adjoint state. From Theorem \ref{th1}, the optimal control is given by
\begin{equation} u^{*}_{\varphi}(x,t)=\left\{
\begin{array}{l l}
0, &\quad 1+p_{\varphi}(x,t) < 0\\
L, & \quad 1+p_{\varphi}(x,t) > 0
\end{array} \right . \label{r}\end{equation} a.e. $(x,t)\in \omega \times (0,T)$, where $p_{\varphi}$ is the solution to (\ref{f}).\\

By multiplying (\ref{a}) by $p_{\varphi}$ and (\ref{f}) by
$y_{\varphi}^{*}$, and  integrating both of them on $Q_T$ we
obtain:
$$\int_{0}^{T}\int_{\Omega} [\partial_{t}y_{\varphi}^{*}p_{\varphi}+y_{\varphi}^{*}\partial_{t}p_{\varphi}]dx \ dt +
\int_{0}^{T}\int_{\Omega}[-d\Delta y_{\varphi}^{*} p_{\varphi}+d\Delta p_{\varphi} y_{\varphi}^{*}] dx \ dt +$$
$$+\int_{0}^{T}\int_{\Omega} [-a(x)y_{\varphi}^{*}p_{\varphi}+a(x)y_{\varphi}^{*}p_{\varphi}]dx \ dt =$$
$$= \int_{0}^{T}\int_{\Omega} [-H(\varphi (x))u_{\varphi}^{*}y_{\varphi}^{*}p_{\varphi} + H(\varphi (x))u_{\varphi}^{*}y_{\varphi}^{*} +
H(\varphi (x))u_{\varphi}^{*}y_{\varphi}^{*}p_{\varphi}]dx \ dt.$$
This means that
$$ -\int_{\Omega}y_{0}(x)p_{\varphi}(x,0) dx  = \int_{0}^{T}\int_{\Omega}H(\varphi (x))u_{\varphi}^{*}(x,t)y_{\varphi}^{*}(x,t) dx \ dt$$
and therefore
$$\int_{0}^{T}\int_{\omega}u_{\varphi}^{*}(x,t)y_{\varphi}^{*}(x,t) dx \ dt = - \int_{\Omega}y_{0}(x)p_{\varphi}(x,0) dx $$
Our problem of optimal harvesting becomes a problem of minimizing another functional with respect to the implicit function of $\omega$.
Therefore, we may rewrite the optimal problem as
$$ \underset{\varphi} {Minimize} \left\{\int_{\Omega }y_0(x)p_{\varphi }(x,0)dx
+\alpha\;\int_{\Omega} \delta (\varphi (x)) |\nabla \varphi(x)|
dx+\beta\;\int_{\Omega} H(\varphi (x)) dx\right\} .$$ By using
(\ref{f}) and (\ref{r}) we get that $p_{\varphi}$ is the solution
to
$$\left\{ \begin {array}{ll}
\partial _tp+d\Delta p=-a(x)p+LH(\varphi (x))(1+p)H(1+p), & (x,t)\in Q_T \\
\partial _{\nu }p(x,t)=0, & (x,t)\in \Sigma_T \\
p(x,T)=0, &x\in \Omega .
               \end{array}
  \right. $$
We shall adapt some shape optimization techniques to treat this
last harvesting problem (see also \cite{chanv}, \cite{getreuer}).
As  usual, we will approximate this problem by the following one,
where the Heaviside function $H$ is   substituted  by its
mollified version
$H_{\varepsilon}(t)=\frac{1}{2}\left(1+\frac{2}{\pi}\arctan\left(\frac{t}{\varepsilon}\right)\right),$
and its derivative by the mollified  function
$\delta_{\varepsilon}(t) =
\frac{\varepsilon}{\pi(\varepsilon^2+t^2)}.$

Therefore, for a small but fixed $\varepsilon >0$, the harvesting problem to be investigated is:
$$\underset{\varphi} {Minimize} \ J(\varphi ), $$
where $\varphi:\overline{\Omega}\longrightarrow\mathbb{R}$ is a smooth function,
$$J(\varphi )=\int_{\Omega }y_0(x)p_{\varphi }(x,0)dx
+\alpha\;\int_{\Omega} \delta _{\varepsilon }(\varphi (x)) |\nabla
\varphi(x)| dx+\beta\;\int_{\Omega} H_{\varepsilon }(\varphi (x))
dx,
$$ and $p_{\varphi}= p_{\varphi }(x,t)$ is the solution to
\begin{equation}
\left\{ \begin {array}{ll}
\partial _tp+d\Delta p=-a(x)p+LH_{\varepsilon }(\varphi (x))(1+p)H_{\varepsilon }(1+p), & (x,t)\in Q_T \\
\partial _{\nu }p(x,t)=0, & (x,t)\in \Sigma_T \\
p(x,T)=0, &x\in \Omega .
               \end{array}
  \right. \label{5.1}
\end{equation}
In the following we derive the directional derivative of $J$ (see \cite{ACM}).
\begin{teo}\label{t5.1} For any smooth functions $\varphi, \psi :\overline{\Omega}\longrightarrow\mathbb{R}$ we have that
$$dJ(\varphi )(\psi )=
\int_{\Omega} \delta_{\varepsilon}(\varphi(x))[-\alpha\;\ \textnormal{div}\left(\frac{\nabla \varphi(x)}{|\nabla \varphi(x)|}\right) +\beta\; $$
$$-L\int_0^T (1+p_{\varphi }(x,t))H_{\varepsilon }(1+p_{\varphi }(x,t))r_{\varphi }(x,t)dt]\psi(x) dx+\alpha\;\int_{\partial \Omega }{{\delta _{\varepsilon }(\varphi (x))}\over {|\nabla \varphi (x)|}}\partial _{\nu }\varphi (x)\psi (x)d\sigma ,$$
where $r_{\varphi}$ is the solution to
\begin{equation}\left\{ \begin{array}{ll}
\partial_t r- d\Delta r= a(x)r-LH_{\varepsilon }(\varphi(x))H_{\varepsilon }(1+p_{\varphi })r& ~ \\
 \ \ \ \ \ \ \ \ \ \ \ -LH_{\varepsilon }(\varphi(x))(1+p_{\varphi })\delta_{\varepsilon}(1+p_{\varphi })r, & (x,t)\in Q_T\\
\partial _{\nu } r(x,t)=0,  & (x,t)\in\Sigma_T\\
r(x,0)=y_0(x),  & x\in\Omega. \\
           \end{array}
    \right. \label{5.2}
\end{equation}
\end{teo}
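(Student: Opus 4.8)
The plan is to compute the directional derivative $dJ(\varphi)(\psi) = \frac{d}{d\theta}\big|_{\theta=0} J(\varphi+\theta\psi)$ term by term, treating the three summands of $J$ separately, and then to introduce the adjoint state $r_\varphi$ precisely to convert the awkward term involving $\partial_\theta p_{\varphi+\theta\psi}$ into something local in $\psi$. First I would handle the two geometric terms: differentiating $\int_\Omega H_\varepsilon(\varphi(x))\,dx$ gives $\int_\Omega \delta_\varepsilon(\varphi(x))\psi(x)\,dx$ immediately, and differentiating the De Giorgi term $\int_\Omega \delta_\varepsilon(\varphi(x))|\nabla\varphi(x)|\,dx$ is the classical curvature computation: one gets $\int_\Omega \delta_\varepsilon'(\varphi)\psi|\nabla\varphi| + \delta_\varepsilon(\varphi)\frac{\nabla\varphi\cdot\nabla\psi}{|\nabla\varphi|}\,dx$, and then integrating the second piece by parts transfers the derivative off $\psi$, producing $-\operatorname{div}\!\big(\delta_\varepsilon(\varphi)\frac{\nabla\varphi}{|\nabla\varphi|}\big)$ in the bulk plus a boundary term on $\partial\Omega$. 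Expanding that divergence and using $\nabla\delta_\varepsilon(\varphi)=\delta_\varepsilon'(\varphi)\nabla\varphi$ cancels the $\delta_\varepsilon'(\varphi)|\nabla\varphi|$ contribution against part of the divergence, leaving exactly $-\int_\Omega \delta_\varepsilon(\varphi)\,\operatorname{div}\!\big(\frac{\nabla\varphi}{|\nabla\varphi|}\big)\psi\,dx + \alpha\int_{\partial\Omega}\frac{\delta_\varepsilon(\varphi)}{|\nabla\varphi|}\partial_\nu\varphi\,\psi\,d\sigma$ after restoring the constants.

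Next I would address the main term $\int_\Omega y_0(x)p_\varphi(x,0)\,dx$. Writing $q = \frac{d}{d\theta}\big|_{\theta=0} p_{\varphi+\theta\psi}$, formal differentiation of the state equation \eqref{5.1} shows that $q$ solves the linearized backward problem: $\partial_t q + d\Delta q = -a(x)q + L\big[\delta_\varepsilon(\varphi)\psi(1+p_\varphi)H_\varepsilon(1+p_\varphi) + H_\varepsilon(\varphi)H_\varepsilon(1+p_\varphi)q + H_\varepsilon(\varphi)(1+p_\varphi)\delta_\varepsilon(1+p_\varphi)q\big]$ on $Q_T$, with $\partial_\nu q = 0$ on $\Sigma_T$ and $q(x,T)=0$. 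The derivative of the main term is then $\int_\Omega y_0(x)q(x,0)\,dx$, which I want to express without $q$. The key step is to multiply the $q$-equation by $r_\varphi$ and the $r_\varphi$-equation \eqref{5.2} by $q$, subtract, and integrate over $Q_T$: the $d\Delta$ terms cancel by Green's formula (the no-flux conditions kill the boundary integrals on $\Sigma_T$), the $-a(x)$ terms cancel, and the three $L[\cdots]q$ terms in the $q$-equation are matched precisely by the two zeroth-order terms in the $r_\varphi$-equation plus the cross terms — here one must check the sign bookkeeping carefully, noting that $q$ runs backward ($q(\cdot,T)=0$) while $r_\varphi$ runs forward ($r_\varphi(\cdot,0)=y_0$), so the time-boundary terms produce $\int_\Omega q(x,0)r_\varphi(x,0)\,dx = \int_\Omega q(x,0)y_0(x)\,dx$ on one side. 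What survives on the other side is $-L\int_0^T\int_\Omega \delta_\varepsilon(\varphi(x))(1+p_\varphi)H_\varepsilon(1+p_\varphi)\psi(x)\,r_\varphi(x,t)\,dx\,dt$, which, after swapping the order of integration, is exactly the remaining bulk term in the claimed formula.

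The main obstacle is making the differentiation of $\theta\mapsto p_{\varphi+\theta\psi}$ rigorous: one needs to know that the map is differentiable in an appropriate topology (say from $C^1(\overline\Omega)$ into $C([0,T];L^2(\Omega))$ with the gradient bounds needed to pass limits), which requires a priori estimates on \eqref{5.1} uniform in $\theta$ near $0$ — these follow from $L^\infty$ bounds via the maximum principle together with the Lipschitz continuity of $H_\varepsilon$, $\delta_\varepsilon$ and of $t\mapsto t H_\varepsilon(t)$, plus the standard well-posedness and regularity theory for linear parabolic equations with bounded coefficients used for the linearized problem defining $q$. The geometric terms also carry a hidden regularity assumption, namely $|\nabla\varphi|$ bounded away from zero on a neighborhood of the zero level set and on $\partial\Omega$, so that division by $|\nabla\varphi|$ and the integrations by parts are legitimate; I would simply record this as part of the smoothness hypothesis on $\varphi$, as the statement already does. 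Everything else is routine integration by parts, so once the differentiability of $p_\varphi$ in $\varphi$ is granted and the adjoint identity is set up correctly, assembling the three contributions yields the stated expression for $dJ(\varphi)(\psi)$.
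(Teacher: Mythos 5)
Your proposal follows essentially the same route as the paper's proof: the same term-by-term differentiation of $J$ (curvature computation plus Gauss--Ostrogradski for the geometric terms), the same linearized backward problem for $q=\lim_{\theta\to 0}\frac{1}{\theta}[p_{\varphi+\theta\psi}-p_\varphi]$, and the same duality pairing of the $q$-equation with $r_\varphi$ to convert $\int_\Omega y_0(x)q(x,0)\,dx$ into the stated bulk term. Your added remarks on the differentiability of $\varphi\mapsto p_\varphi$ and the non-degeneracy of $|\nabla\varphi|$ address points the paper passes over with ``it is possible to prove that,'' but the argument itself is the same.
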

\begin{proof}
It is possible to prove that
$$ {1\over {\theta }}[p_{\varphi+\theta \psi}-p_{\varphi}]\rightarrow q_{\psi} \quad \mbox{\rm in } C([0,T];L^{\infty }(\Omega )),$$
as $\theta \rightarrow 0$, where $q_{\psi}$ is the solution to the problem
\begin{equation}\left\{ \begin{array}{ll}
      \partial_t q+ d\Delta q= -a(x)q + L\delta_{\varepsilon}(\varphi(x))(1+p_{\varphi })H_{\varepsilon }(1+p_{\varphi }){\psi}(x)& ~ \\
 \ \ \ \ \ \ \ \ \ \ \      +LH_{\varepsilon }(\varphi(x))H_{\varepsilon }(1+p_{\varphi })q+ LH_{\varepsilon }(\varphi(x))(1+p_{\varphi })\delta_{\varepsilon}(1+p_{\varphi })q, & (x,t)\in Q_T\\
\partial _{\nu } q(x,t)=0,  & (x,t)\in\Sigma_T\\
q (x,T)=0,  & x\in\Omega. \\
           \end{array}
    \right. \label{5.3}\end{equation}
For any $\varphi $  we have that
$$\lim_{\theta \rightarrow 0}{1\over {\theta }}[J(\varphi +\theta \psi )-J(\varphi )]=\int_{\Omega }y_0(x)q_{\psi }(x,0)dx+ \alpha\;\int_{\Omega} \delta'_{\varepsilon}(\varphi(x))\psi (x)|\nabla\varphi(x)| dx $$
$$\begin{aligned}
& +\alpha\;\int_{\Omega }\delta _{\varepsilon }(\varphi (x)){{\nabla \varphi (x)\cdot \nabla \psi (x)}\over {|\nabla \varphi (x)|}}dx +  \beta\; \int_{\Omega}\delta _{\varepsilon }(\varphi (x))\psi (x) dx \\
&= \int_{\Omega}y_{0}(x)q_{\psi}(x,0) dx + \alpha\;\int_{\Omega} \text{div}(\delta_{\varepsilon}(\varphi(x)){{\nabla \varphi (x)}\over {|\nabla \varphi (x)|}}\psi (x))dx \\
&-\alpha\;\int_{\Omega }\delta _{\varepsilon }(\varphi (x))\psi (x) \text{div}\left({{\nabla \varphi (x)}\over {|\nabla \varphi (x)|}}\right)dx
+ \beta\; \int_{\Omega}\delta _{\varepsilon }(\varphi(x))\psi(x) dx.
\end{aligned}$$
Using Gauss-Ostrogradski's formula we get
\begin{equation}\begin{aligned}
dJ(\varphi )(\psi )=&\int_{\Omega}y_{0}(x)q_{\psi}(x,0) dx - \alpha\;\int_{\Omega} \delta_{\varepsilon}(\varphi(x))\text{div}\left(\frac{\nabla \varphi(x)}{|\nabla \varphi(x)|}\right)\psi(x) dx \\
&+ \beta\; \int_{\Omega}\delta_{\varepsilon}(\varphi(x))\psi(x) dx+\alpha\;\int _{\partial \Omega }{{\delta _{\varepsilon }(\varphi (x))}\over {|\nabla \varphi (x)|}}\partial _{\nu }\varphi (x)\psi (x)d\sigma .
\end{aligned}
\label{5.4}\end{equation}
By multiplying the first equation in \eqref{5.3} by $r_{\varphi}$ and integrating
over $Q_T$, using \eqref{5.2} we get that
\begin{equation}\int_{\Omega} y_0(x)q_{\psi }(x,0)dx = -\int_0^T \int_{\Omega} Lr_{\varphi}(x,t)\delta_{\varepsilon}(\varphi(x))(1+p_{\varphi }(x,t))H_{\varepsilon }(1+p_{\varphi }(x,t))\psi(x) dx\ dt. \label{5.5}\end{equation}
Now, from \eqref{5.4} and \eqref{5.5}, we get the conclusion of Theorem \ref{t5.1}.
\end{proof}
Let  us   remark that the gradient descent with respect to
$\varphi $ is
\begin{equation}\left\{ \begin{array}{ll}
      \partial_{\theta } \varphi(x,\theta ) = \delta_{\varepsilon}(\varphi (x,\theta ))[\alpha\; \text{\rm div}
      \left( \frac{\nabla \varphi(x,\theta )}{|\nabla \varphi(x, \theta )|}\right)-\beta&\\
 \ \ \ \ \ \ \  \ \ \ \ \ \ \     +L\int_0^T (1+p_{\varphi }(x,t))H_{\varepsilon }(1+p_{\varphi }(x,t))r_{\varphi }(x,t)dt ], & x \in \Omega , \ \theta >0 \\
\frac{\delta_{\varepsilon}(\varphi(x,\theta ))}{|\nabla \varphi(x,\theta )|}\partial_{\nu} \varphi (x,\theta )=0, &  x\in \partial\Omega , \ \theta >0 .
           \end{array} \right. \label{5.6}\end{equation}
($\theta$ is an artificial time).

\subsection{ Numerical implementation}

From Theorem \ref{t5.1} we derive the following conceptual iterative algorithm, a semi-implicit gradient descent method, to improve at each step the region where the harvesting effort acts in order to obtain a smaller value for $J $.
\vspace{3mm}

\tt
\noindent\textbf{STEP 0:} set $n := 0$, $J^{(0)} := 10^6$ and $\theta _0>0$ a small constant

\hspace{1cm}initialize $\varphi^{(0)}=\varphi^{(0)}(x,0)$

\vspace{1.5mm}

\noindent \textbf{STEP 1:} compute  $p^{(n+1)}$ the solution of \eqref{5.1} corresponding to $\varphi^{(n)}(\cdot ,0)$

\vspace{1.5mm}

\hspace{1cm}compute $\displaystyle J^{(n+1)} =
\int_{\Omega }y_0(x)p^{(n+1)}(x,0)dx$

\hspace{2.6cm} $+\alpha\;\int_{\Omega} \delta _{\varepsilon }(\varphi ^{(n)}(x,0)) |\nabla \varphi ^{(n)}(x,0)| dx\displaystyle +\beta\;\int_{\Omega} H_{\varepsilon }(\varphi ^{(n)}(x,0)) dx$.

\vspace{1.5mm}

\noindent \textbf{Step 2:} if $\left|J^{(n+1)} - J^{(n)}\right| < \varepsilon_1$ or $J^{(n+1)} \geq J^{(n)}$ then \textbf{STOP}

\hspace{1cm}else go to \textbf{Step 3}.

\noindent \textbf{Step 3:} compute $r^{(n+1)}$ the solution of problem \eqref{5.2} corresponding to

\hspace{1cm}$\varphi^{(n)}(\cdot, 0)$ and $p^{(n+1)}$.

\vspace{1.5mm}

\noindent\textbf{Step 4:} compute $\varphi^{(n+1)}$ using \eqref{5.6} and the initial condition

\hspace{1cm}$\varphi ^{(n+1)}(x,0)=\varphi ^{(n)}(x,\theta _0)$ and a semi-implicit timestep scheme

\vspace{1.5mm}

\noindent\textbf{Step 5:} if $\|\varphi^{(n+1)}-\varphi^{(n)}\|_{L^2}<\varepsilon_2$ then \textbf{STOP}

\hspace{1cm}else $n := n + 1$

\hspace{1.9cm}go to \textbf{Step 1}

\rm
\vspace{3mm}

\noindent $\varepsilon_1 > 0$ in Step 2 and $\varepsilon_2 > 0$ in Step 5 are prescribed convergence parameters.
\vspace{3mm}

For the implementation we consider $\Omega=(0, 1) \times (0, 1)$ such that the sides are parallel with $Ox_1$ and $Ox_2$ axes. We introduce equidistant discretization nodes for both axes
corresponding to $\Omega$. Thus, the domain $\Omega$ is
approximated by a grid of $(N+1)\times (N+1)$ equidistant nodes,
namely $$\{(x_1^i, x_2^j):\ x_1^i = (i-1)\Delta x_1, x_2^j =
(j-1)\Delta x_2,  i,j=\overline{1,N+1}, \Delta x_1=\Delta x_2 =
1/N\}.$$
The interval $[0,T]$ is also discretized by $M+1$ equidistant nodes,
$t^k = (k-1)\Delta t, k=1, 2,...M, M+1, \Delta t=T/M$. We take $M$ and $N$ to be even. We denote by $\varphi_{ij} = \varphi(x_1^i,x_2^j), i,j = \overline{1, N+1}$.

In order to approximate the solution of the parabolic system from Step 1 we use a finite difference method, an implicit one, descending with respect to time levels.

We denote by $h = \Delta x_1=\Delta x_2$, $p_{ij}^{k} = p(x_1^i, x_2^j, t^k)$, $a_{ij} = a(x_1^i, x_2^j)$, $G_{ij}^{k} =\Delta t L H_{\varepsilon }(\varphi_{ij})(1+ p_{ij}^{k+1})H_{\varepsilon }(1+ p_{ij}^{k+1})$, $k=\overline{1,M+1}$. The numerical scheme is
$$
\left\{ \begin{array}{ll}
\frac{p_{ij}^{k+1}-p_{ij}^{k}}{\Delta t}+d\frac{p_{i-1,j}^{k}-2p_{ij}^{k}+p_{i+1,j}^{k}}{h^2} +d\frac{p_{i,j-1}^{k}-2p_{ij}^{k}+p_{i,j+1}^{k}}{h^2}\\
~~~~~~~~~+a_{ij}p_{ij}^{k}-G_{ij}^{k+1}=0, &i,j = \overline{2,N},\\ &k = M, \dots,1,\\
p_{i,1}^{k} = p_{i,2}^{k}, p_{i,N+1}^{k} = p_{i,N}^{k}, p_{1,j}^{k} = p_{2,j}^{k}, p_{N+1,j}^{k} = p_{N,j}^{k}, &i,j = \overline{1,N+1}, \\&k = M, \dots,1,\\
p_{ij}^{M+1} = 0, &i, j= \overline{1, N+1}.
\end{array}
\right.
$$
We  take the diffusion coefficient $d = 1$ for the implementation,
and denote by $\lambda = \Delta t/h^2.$  For the interior nodes we
get
$$(1 + 4\lambda - \Delta t \ a_{ij})p_{ij}^{k}-\lambda p_{i-1,j}^{k}-\lambda p_{i+1,j}^{k} -\lambda p_{i,j-1}^{k}-\lambda p_{i,j+1}^{k}
=p_{ij}^{k+1}-\Delta t G_{ij}^{k+1}$$
for $i,j = \overline{2,N}, k = M, \dots,1$.

by using the Neumann conditions on the boundary, the numerical scheme becomes
\begin{equation}\label{s1}
\left\{
\begin{array}{ll}
(1 + 2\lambda - \Delta t\ a_{2,2})p_{2,2}^{k}-\lambda p_{2,3}^{k}-\lambda p_{3,2}^{k}
=p_{2,2}^{k+1}-\Delta t G_{2,2}^{k+1},~~~~~~~ \ i=2, \ j = 2\\
(1 + 3\lambda - \Delta t \ a_{2,j})p_{2,j}^{k}-\lambda p_{3,j}^{k}-\lambda p_{2,j-1}^{k}-\lambda p_{2,j+1}^{k}
=p_{2,j}^{k+1}-\Delta t G_{2,j}^{k+1},&\\
~~~~~~~~~~~~~~~~~~~~~~~~~~~~~~~~~~~~~~~~~~~~~~~~~~~~~~~~~~~~~~~~~~~~~~~~~~~~~i=2, \ 2<j<N\\
(1 + 2\lambda - \Delta t\ a_{2,N})p_{2,N}^{k}-\lambda p_{3,N}^{k}-\lambda p_{2,N-1}^{k}
=p_{2,N}^{k+1}-\Delta t G_{2,N}^{k+1}, \ i = 2, \ j = N\\
(1 + 3\lambda - \Delta t \ a_{i,2})p_{i,2}^{k}-\lambda p_{i,3}^{k}-\lambda p_{i-1,2}^{k}-\lambda p_{i+1,2}^{k}
=p_{i,2}^{k+1}-\Delta t G_{i,2}^{k+1},\\
~~~~~~~~~~~~~~~~~~~~~~~~~~~~~~~~~~~~~~~~~~~~~~~~~~~~~~~~~~~~~~~~~~~~~~~~~~~~2<i<N, \ j=2\\
(1 + 4\lambda - \Delta t \ a_{ij})p_{ij}^{k}-\lambda p_{i-1,j}^{k}-\lambda p_{i+1,j}^{k} -\lambda p_{i,j-1}^{k}-\lambda p_{i,j+1}^{k}
=p_{ij}^{k+1}-\Delta t G_{ij}^{k+1},\\
~~~~~~~~~~~~~~~~~~~~~~~~~~~~~~~~~~~~~~~~~~~~~~~~~~~~~~~~~~~~~~~~~~~~~~2<i<N, \ 2<j<N\\
(1 + 3\lambda - \Delta t \ a_{i,N})p_{i,N}^{k}-\lambda p_{i,N-1}^{k}-\lambda p_{i-1,N}^{k}-\lambda p_{i+1,N}^{k}
=p_{i,N}^{k+1}-\Delta t G_{i,N}^{k+1},\\
~~~~~~~~~~~~~~~~~~~~~~~~~~~~~~~~~~~~~~~~~~~~~~~~~~~~~~~~~~~~~~~~~~~~~~~~~~~2<i<N, \  j=N\\
(1 + 2\lambda - \Delta t\ a_{N,2})p_{N,2}^{k}-\lambda p_{N,3}^{k}-\lambda p_{N-1,2}^{k}
=p_{N,2}^{k+1}-\Delta t G_{N,2}^{k+1}, \ i=N, \ j=2\\
(1 + 3\lambda - \Delta t \ a_{N,j})p_{N,j}^{k}-\lambda p_{N-1,j}^{k}-\lambda p_{N,j-1}^{k}-\lambda p_{N,j+1}^{k}
=p_{N,j}^{k+1}-\Delta t G_{N,j}^{k+1},\\
~~~~~~~~~~~~~~~~~~~~~~~~~~~~~~~~~~~~~~~~~~~~~~~~~~~~~~~~~~~~~~~~~~~~~~~~~~~i=N, \ 2<j<N\\
(1 + 2\lambda - \Delta t\ a_{N,N})p_{N,N}^{k}-\lambda p_{N-1,N}^{k}-\lambda p_{N,N-1}^{(k)}
=p_{N,N}^{k+1}-\Delta t G_{N,N}^{k+1},\\
~~~~~~~~~~~~~~~~~~~~~~~~~~~~~~~~~~~~~~~~~~~~~~~~~~~~~~~~~~~~~~~~~~~~~~~~~~~~~~~~i=N, \ j = N.
\end{array}
\right.
\end{equation}
We denote by $$x^{k} = (p_{2,2}^{k}, p_{2,3}^{k}, \dots,p_{2,N}^{k},p_{3,2}^{k},p_{3,3}^{k}, \dots, p_{3,N}^{k},\dots,p_{N,2}^{k}, p_{N,3}^{k},\dots,p_{N,N}^{k})^T$$
the vector formed by the values of $p$ at time level $k$ for the interior nodes. This is a vector of dimension $(N-1)^2$. We also use the following notations $P = p_{ij}^{k+1}$, $G =\Delta t G_{ij}^{k+1}$, $E_1 = \Delta t(- a_{ij})$, $E_2 = 1 + 2\lambda + E_1$, $E_3 = 1 + 3\lambda + E_1$, $E_4 = 1 + 4\lambda + E_1$. This quantities must be evaluated at each time step $k = M, M-1, \dots,1$ and for all $i, j = \overline{2,N}$. The algebraic linear system to solve at each time step $k = M, M-1, \dots,1$ is of the form $Ax^{k} = B$, with the system matrix $A$ of dimension $(N-1)^2\times(N-1)^2$ and the vector of constant terms $B$ of dimension $(N-1)^2$. Based on \eqref{s1} and using also the final condition, for each time level $k = M, M-1, \dots,1$ we generate the matrix $A$ and the vector $B$ with the following algorithm: we denote by $q$ the row index of matrix $A$; at the beginning of each time iteration we make the initializations: $q = 0$, $A = 0_{(N-1)^2\times(N-1)^2}$, and $B = 0_{(N-1)^2\times 1}$. Then, for $i$ from $2$ to $N$ and for $j$ from $2$ to $N$, after the evaluation of $E_1, E_2, E_3, E_4, G, P$, we start the construction of $A$ and $B$. The index $q$ is incremented for each $i$ and $j$. Therefore,
\begin{itemize}
\setlength{\itemsep}{0cm}
\setlength{\parskip}{0cm}
\item\texttt{if i = 2 and j = 2 then q = q + 1; A(q,1) = $E_2$; A(q,2) = -\lam; \\A(q,N) = -\lam; B(q) = P - G;}\\
\item\texttt{if i = 2 and 2 < j < N then q = q + 1; A(q,j-2) = -\lam;\\ A(q,j-1) = $E_3$; A(q,N+j-2) = -\lam; A(q,j) = -\lam; B(q) = P - G;}\\
\item\texttt{if i = 2 and j = N then q = q + 1; A(q,N-2) = -\lam; \\A(q,N-1) = $E_2$; A(q,2*N-2) = -\lam; B(q) = P - G;}\\
\item\texttt{if 2 < i < N and j = 2 then q = q + 1;\\ A(q,(i-3)*(N-1)+1) = -\lam; A(q,(i-2)*(N-1)+1) = $E_3$;\\ A(q,(i-1)*(N-1)+1) = -\lam; A(q,(i-2)*(N-1)+2)= -\lam; B(q) = P - G;}\\
\item\texttt{if 2 < i < N and 2 < j< N then q = q + 1;\\ A(q,(i-3)*(N-1)+j-1) = -\lam; A(q,(i-1)*(N-1)+j-1) = -\lam\\; A(q,(i-2)*(N-1)+j-1) = $E_4$;\ A(q,(i-2)*(N-1)+j-2) = -\lam;\\ A(q,(i-2)*(N-1)+j) = -\lam; B(q) = P - G;}\\
\item\texttt{if 2 < i < N and j = N then q = q + 1; A(q,(i-2)*(N-1)) = -\lam;\\ A(q,(i-2)*(N-1)+N-2) = -\lam; A(q,(i-1)*(N-1)) = $E_3$;\\ A(q,i*(N-1)) = -\lam; B(q) =P - G;}\\
\item\texttt{if i = N and j = 2 then q = q + 1; A(q,(N-3)*(N-1)+1) = -\lam;\\ A(q,(N-2)*(N-1)+1) = $E_2$; A(q,(N-2)*(N-1)+2) = -\lam; B(q) = P - G;}\\
\item\texttt{if i = N and 2 < j < N then q = q + 1;\\ A(q,(N-3)*(N-1)+j-1) = -\lam; A(q,(N-2)*(N-1)+j-2) = -\lam;\\ A(q,(N-2)*(N-1)+j-1) = $E_3$; A(q,(N-2)*(N-1)+j) = -\lam; B(q) = P - G;}\\
\item\texttt{if i = N and j = N then q = q + 1; A(q,N*(N-2)) = -\lam;\\ A(q,(N-2)*(N-1)) = -\lam; A(q,(N-1)*(N-1)) = $E_2$; B(q) = P - G;}
\end{itemize}
Then, the resulting algebraic linear system is solved by Gaussian elimination.  The solution obtained is a vector $D$ of dimension $(N-1)^2$.
Therefore, we get the corresponding solution $p_{ij}$ at time step $k$ by the process:\\
\texttt{q = 0;}\\
\texttt{for i = 2 to N}

\texttt{for j = 2 to N}

\hspace{0.3cm} \texttt{q = q + 1; $p_{ij}^{k}$ = D(q);}

By using the boundary condition, the solution is completed for
$i=1$, $i = N+1$ and $j = \overline{1,N+1}$ and for $j=1$, $j =
N+1$ and $i = \overline{1,N+1}$. Now we have the complete solution
$p_{ij}^{k}$ and we can proceed with the time step $k-1$.

The integrals from Step 1 are numerical computed using Simpson's method corresponding to the discrete grid. For each iteration $n=1, 2, 3, \dots$, we have to evaluate the first integral
$$F^{(n)} = \int\limits_{\Omega}f^{(n)}(x)dx,$$
where
$$f^{(n)}(x) = y_0(x)p^{(n)}(x,0), x \in \Omega.$$
In order to approximate this integral we first calculate, for all $i = \overline{1,N+1}$,
$$r(i) = \frac{h}{3}\left[f^{(n)}(x_1^i,x_2^1)+f^{(n)}(x_1^i,x_2^{N+1})+4\sum_{\substack{j = 2\\j = j+2}}^Nf^{(n)}(x_1^i,x_2^j)+ 2\sum_{\substack{j = 3\\j = j+2}}^{N-1}f^{(n)}(x_1^i,x_2^j)\right],$$
and then
$$F^{(n)}\approx\frac{h}{3}\left[r(1)+r(N+1)+4\sum_{\substack{i = 2\\i = i+2}}^Nr(i)+ 2\sum_{\substack{i = 3\\i = i+2}}^{N-1}r(i)\right].$$ To numerical evaluate of the second integral we must approximate $|\nabla \varphi(x_1^i, x_2^j)|$. In order to do this, we use central difference both in $x_1$ and in $x_2$ direction.
$$|\nabla \varphi(x_1^i, x_2^j)| = \sqrt{(\partial_{x_1}\varphi(x_1^i, x_2^j))^2+(\partial_{x_2}\varphi(x_1^i, x_2^j))^2}$$
$$=\sqrt{\frac{(\varphi_{i+1,j}-\varphi_{i-1,j})^2+(\varphi_{i,j+1}-\varphi_{i,j-1})^2}{4h^2}}, \ i,j = \overline{2, N}$$
$$|\nabla \varphi(x_1^1, x_2^j)|=\sqrt{\frac{(\varphi_{2,j}-\varphi_{1,j})^2+(\varphi_{1,j+1}-\varphi_{1,j})^2}{h^2}}, \ j = \overline{2, N}$$
$$|\nabla \varphi(x_1^{N+1}, x_2^j)|=\sqrt{\frac{(\varphi_{N+1,j}-\varphi_{N,j})^2+(\varphi_{N+1,j+1}-\varphi_{N+1,j})^2}{h^2}}, \ j = \overline{2, N}$$
$$|\nabla \varphi(x_1^i, x_2^1)|=\sqrt{\frac{(\varphi_{i+1,1}-\varphi_{i,1})^2+(\varphi_{i,2}-\varphi_{i,1})^2}{h^2}}, \ i = \overline{2, N}$$
$$|\nabla \varphi(x_1^i, x_2^{N+1})|=\sqrt{\frac{(\varphi_{i+1,N+1}-\varphi_{i,N+1})^2+(\varphi_{i,N+1}-\varphi_{i,N})^2}{h^2}}, \ i = \overline{2, N}$$
$$|\nabla \varphi(x_1^1, x_2^{1})| = |\nabla \varphi(x_1^2, x_2^{1})|, |\nabla \varphi(x_1^1, x_2^{{N+1}})| = |\nabla \varphi(x_1^1, x_2^{N})|,$$
$$|\nabla \varphi(x_1^{N+1}, x_2^{1})| = |\nabla \varphi(x_1^{N+1}, x_2^{2})|, |\nabla \varphi(x_1^{N+1}, x_2^{N+1})| = |\nabla \varphi(x_1^{N+1}, x_2^{N})|.$$
The parabolic system from Step 3 is approximated also using a
finite difference method, but now ascending with respect to time
levels. For each iteration $n = 1, 2, \dots$ and for each time
level $k = 1, 2, 3,\dots,M$, the matrix of the resulting algebraic
system is the same as matrix A previously determinated, with $B(q)
= p_{ij}^{k}, q = \overline{1, (N-1)^2}$, and $E_1 = \Delta t
(-a_{ij}+G_{ij})$, $G_{ij} = L H_{\varepsilon
}(\varphi_{ij})H_{\varepsilon }(1+ p_{ij}^{k+1})+LH_{\varepsilon
}(\varphi_{ij})(1+ p_{ij}^{k+1})\delta_{\varepsilon }(1+
p_{ij}^{k+1})$, which are evaluated for each $i,j = \overline{2,
N}$. The resulting algebraic linear system is solved by Gaussian
elimination. By using the boundary conditions we complete the
solution of the parabolic system for each time level.

\eject

\noindent{\bf Numerical examples}

\vspace{2mm}

\noindent We consider a normal initial population density $y_0(x_1,x_2) = \frac{1}{2\pi} e^{-\frac{x_1^2+x_2^2}{2}}$, where $(x_1, x_2)\in \Omega$. Let the diffusion coefficient be $d=1$, the final time $T=1$, $L=1$, and the regularization parameter $\varepsilon = 1$. We take the space discretization step and the time discretization step to be equal $\Delta x_1=\Delta x_2=\Delta t=0.05$. For the convergence tests we consider $\varepsilon_1 = \varepsilon_2 =0.001$.

\noindent In the following figure, the white area represents the subregion $\omega$ that provides a small value for $J$.

\vspace{1.5mm}

\noindent {\bf Test 1.} We take the natural growth rate of the population to be a constant, e.g. $a (x_1,x_2) = 3$, $(x_1,x_2) \in \Omega$. The initialization of $\varphi$ is made by $\varphi^{(0)}(x_1,x_2)= 0.25 - \sqrt{(x_1-0.5)^2+(x_2-0.5)^2}$, $(x_1, x_2)\in \Omega$.  We penalize the length of $\partial \omega$ by $\alpha=0.4$ and the area of $\omega$ by $\beta = 0.6$. The corresponding results are shown in Figure 1.

\begin{figure}[!h]
\centering
\subfigure[Initial $\omega$]{\label{fig:1_2}
\includegraphics[width=0.4\textwidth]{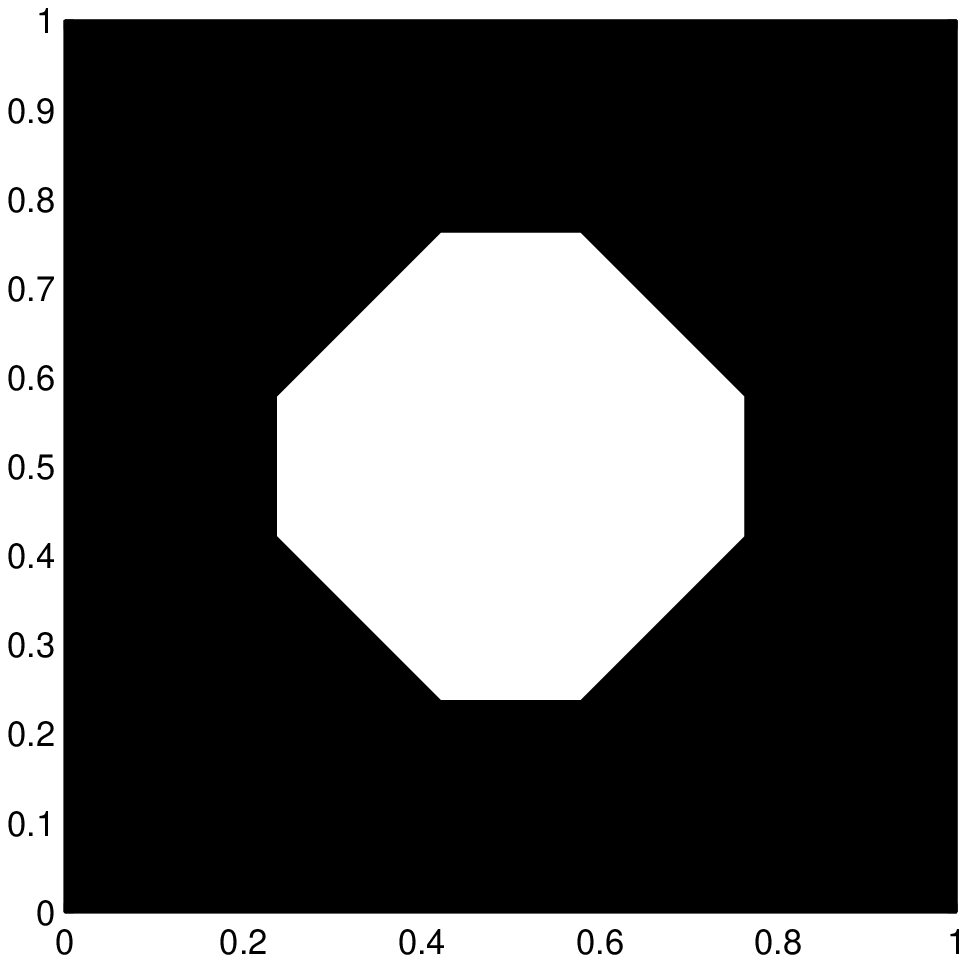}}
\subfigure[Final $\omega$]{\label{fig:2_2}
\includegraphics[width=0.4\textwidth]{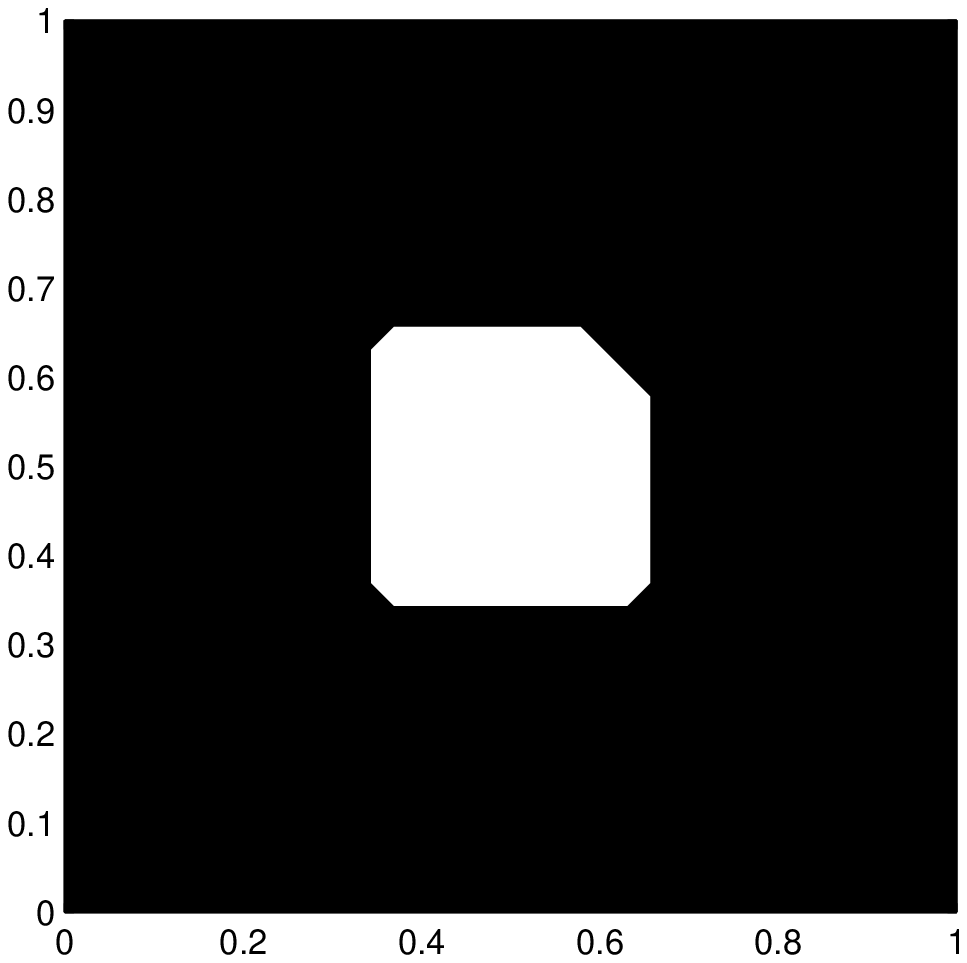}}
\caption{The representation of initial and final iterations of $\omega$ for $\alpha = 0.4$ and $\beta =0.6$.}
\label{fig:C_2}
\end{figure}

\vspace{1.5mm}

\noindent {\bf Test 2.} We use the same input data from Test 1 and the initialization of $\varphi$ with $\varphi^{(0)}(x_1,x_2)=sin(3\pi x_1)sin(3\pi x_2)$, $(x_1, x_2)\in \Omega$, a function that produce a initial checkerboard shape.  We penalize the length of $\partial \omega$ by $\alpha=0.5$ and the area of $\omega$ by $\beta = 0.5$. The results are shown in Figure 2.

\begin{figure}[!h]
\centering
\subfigure[Initial $\omega$]{\label{fig:1_3}
\includegraphics[width=0.4\textwidth]{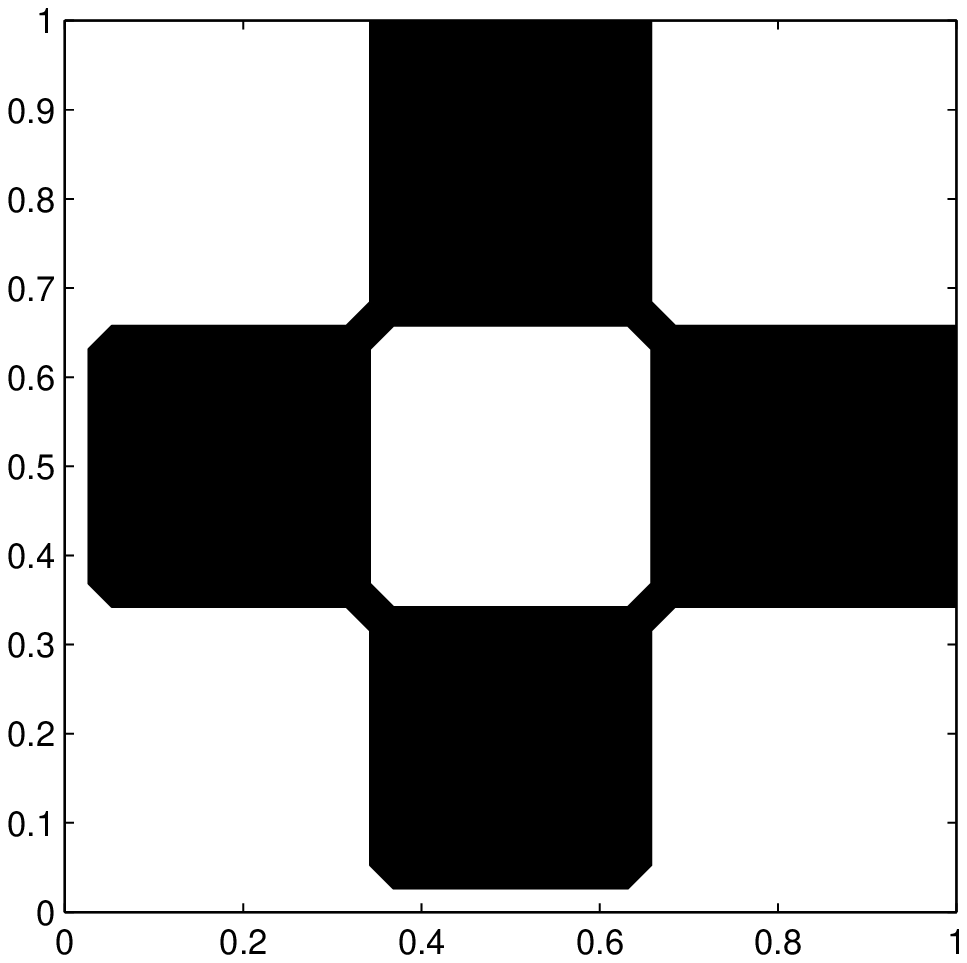}}
\subfigure[Final $\omega$]{\label{fig:2_3}
\includegraphics[width=0.4\textwidth]{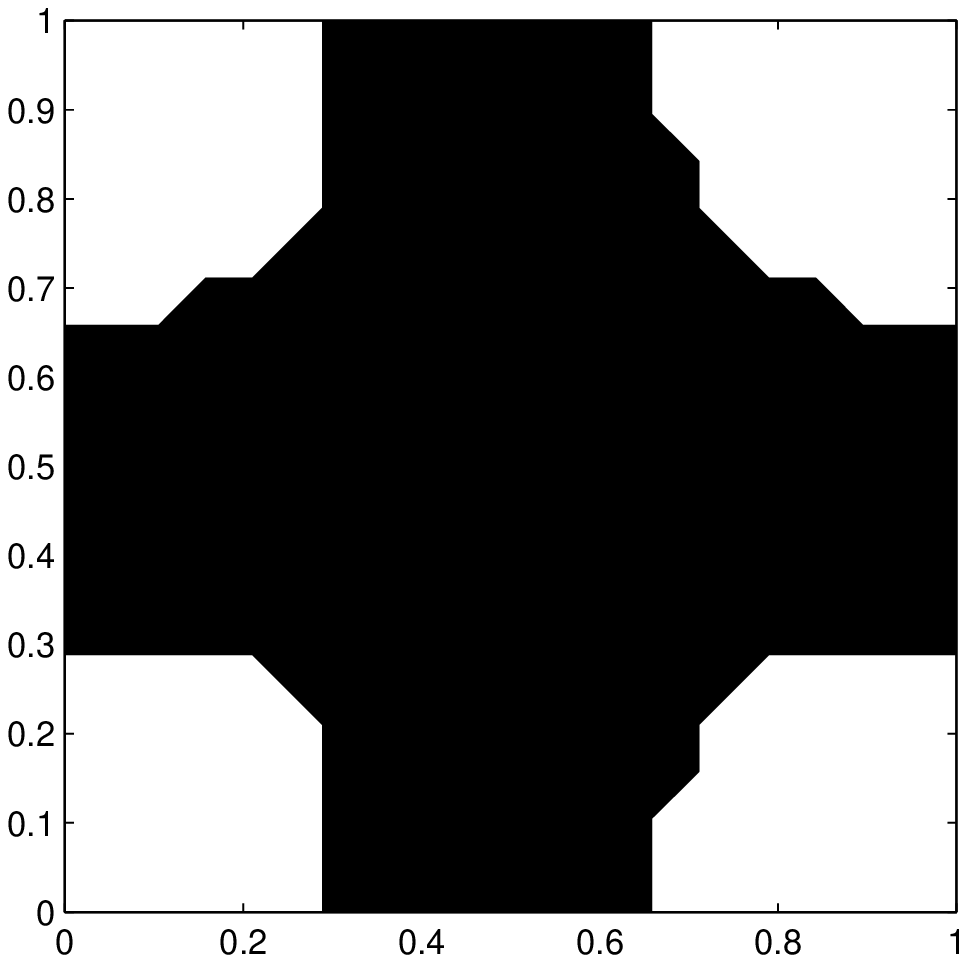}}
\caption{The representation of initial and final iterations of $\omega$ for $\alpha = 0.5$ and $\beta =0.5$.}
\label{fig:C_3}
\end{figure}

\section{Eradicating an age-structured pest population with diffusion}
Consider here an age-structured population dynamics with diffusion and logistic term:
\begin{equation}\left\{ \begin {array}{ll}
\partial _ty(x,a,t)+\partial _ay(x,a,t)+\mu (a)y(x,a,t) -d\Delta y(x,a,t)=\\
 \ \ \ \ \ \ -\mathcal{M}\left(\int_0^Ay(x,a,t)da\right)y(x,a,t)\\
 \ \ \ \ \ \ -\chi_{\omega }(x)u(x,t)y(x,a,t),~~~~~~~~~~~~x\in\Omega, \ a\in (0,A), \ t\in(0,+\infty) \\
\partial _{\nu }y(x,a,t)=0,~~~~~~~~~~~~~~~~~~~~~~~~~~~x\in\partial\Omega, \ a\in (0,A), \ t\in(0,+\infty)\\
y(x,0,t)=\int_0^A\beta (a)y(x,a,t)da, ~~~~~~~~x\in\Omega, \ t\in(0,+\infty)\\
y(x,a,0)=y_0(x,a), ~~~~~~~~~~~~~~~~~~~~~~x\in\Omega, \ a\in (0,A).
               \end{array}
  \right. \label{4.1}\end{equation}
Here $A\in (0,+\infty )$ is the maximal age for the population
species and $y(x,a,t)$ is the population density at position $x$,
age $a$ and time $t$; $d\in (0,+\infty )$ is the diffusion
coefficient, $\mu (a)$ is the mortality rate and $\beta (a)$ is
the fertility rate for individuals of age $a$; $y_0(x,a)$ is the initial density of population at position $x$ and age $a$.
$u(x,t)$ is a harvesting effort (the control) and is localized in the subregion $\omega$; $u$ does not depend on age.

Assume that $\Omega$, $\omega$ satisfy the same assumptions as in
the introduction, and that  the following hypotheses are satisfied
as well
\begin{itemize}
\item[{\bf (H1')}] $\beta \in C([0,A])$, $\beta (a)\geq 0, \forall a\in [0,A]$;
\item[{\bf (H2')}] $\mu \in C([0,A))$, $\mu (a)\geq 0, \forall a\in [0,A]$, $\int_0^A \mu(a)da = +\infty$;
\item[{\bf (H3')}] $y_0\in L^{\infty }(\Omega \times (0,A))$, $y_0(x,a)\geq 0$ a.e. in $\Omega \times (0,A)$;
\item[{\bf (H4')}] $\mathcal{M}:[0,+\infty)\longrightarrow[0,+\infty)$ is continuously differentiable, $\mathcal{M}'(r)>0, \forall r>0,  \mathcal{M}(0)=0, \lim\limits_{r\to+\infty}\mathcal{M}(r) = +\infty$.
\end{itemize}
For any $u \in L_{loc}^{\infty}(\overline{\omega}\times[0,+\infty))$, such that $L\geq u(x,t) \geq 0$ a.e., there exists a unique solution $y^u$ to \eqref{4.1} (here $L\in (0,+\infty)$ is a constant; $L$ is the maximal affordable effort). This solution is nonnegative (see \cite{ACM}).

Our goal is to eradicate this population which is considered to be a pest population.
\begin{defi}
We say that the population is eradicable (zero-stabilizable) if for any $y_0$ satisfying the hypothesis (H3') there exists $u \in L_{loc}^{\infty}(\overline{\omega}\times[0,+\infty))$, satisfying $L\geq u(x,t) \geq 0$ a.e., such that
$$\lim\limits_{t\to+\infty}y^u(\cdot,\cdot,t) = 0 \text{ in } L^{\infty}(\Omega\times(0,A)).$$
(and $y^u(x,a,t) \geq 0$ a.e. in $\Omega \times (0,A) \times (0, +\infty)$).
\end{defi}
Note that this is a problem of zero-stabilization with control and state constraints.

Denote by $r^*\in \mathbb{R}$ the solution to the equation
$$\int_0^A\beta(a)e^{-\int_0^a\mu(\tau)d\tau-ra}da = 1$$
and $\lambda_1^\omega$ the principal eigenvalue for
$$\left\{ \begin {array}{ll}
-d\Delta \phi=-\chi_{\omega}L\phi,&~x\in\Omega\\
\partial _{\nu }\phi=0,&~x\in\partial\Omega.\\
      \end{array}
  \right. $$
\begin{teo}
(i) If the population is eradicable then
$$\lambda_1^\omega \geq r^*.$$
(ii) If $\lambda_1^\omega > r^*$ then the population is eradicable and the harvesting effort $u\equiv L$ diminishes exponentially the population.
\end{teo}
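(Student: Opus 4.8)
The plan is to reduce both parts to the large--time theory of a \emph{scalar} age--structured (renewal) equation, using the principal eigenfunction attached to $\lambda_1^\omega$. Let $\phi_1>0$ solve $-d\Delta\phi_1+\chi_\omega L\phi_1=\lambda_1^\omega\phi_1$ in $\Omega$, $\partial_\nu\phi_1=0$ on $\partial\Omega$; one records first that $\lambda_1^\omega>0$ whenever $|\omega|>0$ (variational characterisation) and that $c_0:=\min_{\overline\Omega}\phi_1>0$ (standard for a self--adjoint elliptic operator). The key computation is: if $z\ge0$ solves the \emph{linear} problem $\partial_tz+\partial_az+(\mu(a)+c)z-d\Delta z+\chi_\omega Lz=0$ with no--flux boundary condition and renewal condition $z(x,0,t)=\int_0^A\beta(a)z(x,a,t)\,da$, then multiplying by $\phi_1$, integrating over $\Omega$ and using Green's formula together with $d\Delta\phi_1=\chi_\omega L\phi_1-\lambda_1^\omega\phi_1$, the quantity $Z(a,t):=\int_\Omega z(x,a,t)\phi_1(x)\,dx$ solves the scalar McKendrick equation with mortality $\mu(a)+c+\lambda_1^\omega$. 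Its Malthusian exponent $\sigma$, i.e.\ the real root of $\int_0^A\beta(a)e^{-\int_0^a\mu(\tau)d\tau-(\lambda_1^\omega+c+\sigma)a}\,da=1$, equals $\sigma=r^*-\lambda_1^\omega-c$ by the definition of $r^*$ and the strict monotonicity of $r\mapsto\int_0^A\beta(a)e^{-\int_0^a\mu-ra}\,da$.

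For part (i) I would argue by contraposition. Assume $\lambda_1^\omega<r^*$, fix $\varepsilon\in(0,r^*-\lambda_1^\omega)$, take $y_0\equiv1$, and suppose some admissible $u$ eradicates the population. Since $\mathcal M$ is continuous with $\mathcal M(0)=0$ and $y^u(\cdot,\cdot,t)\to0$ in $L^\infty$, there is $T_\varepsilon$ with $\mathcal M\big(\int_0^Ay^u(x,a,t)\,da\big)\le\varepsilon$ for all $x$ and $t\ge T_\varepsilon$; together with $\chi_\omega u\le\chi_\omega L$ and $y^u\ge0$ this makes $y^u$ on $[T_\varepsilon,\infty)$ a supersolution of the linear problem above with $c=\varepsilon$, so by the comparison principle $y^u\ge w$, $w$ being the linear solution with $w(\cdot,\cdot,T_\varepsilon)=y^u(\cdot,\cdot,T_\varepsilon)$. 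Because $y_0\equiv1>0$ and $\beta\not\equiv0$, positivity of solutions of \eqref{4.1} gives $y^u(\cdot,\cdot,T_\varepsilon)>0$ a.e., hence $W(a,t):=\int_\Omega w(x,a,t)\phi_1(x)\,dx$ solves the scalar renewal equation with strictly positive initial datum and, by classical renewal theory, grows like $e^{(r^*-\lambda_1^\omega-\varepsilon)(t-T_\varepsilon)}\to+\infty$ (the positive prefactor coming from positivity of the datum). But $W(a,t)\le\|\phi_1\|_\infty|\Omega|\,\|y^u(\cdot,\cdot,t)\|_{L^\infty}\to0$ --- a contradiction. Hence eradicability forces $\lambda_1^\omega\ge r^*$.

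For part (ii), with $\lambda_1^\omega>r^*$ and $\sigma_*:=r^*-\lambda_1^\omega<0$, I would construct an explicit supersolution. The naive candidate $\phi_1(x)\Phi_1(a)e^{\sigma_*t}$, with $\Phi_1(a)=e^{-\int_0^a\mu(\tau)d\tau-r^*a}$ (so $\Phi_1'=-(\mu+r^*)\Phi_1$ and $\Phi_1(0)=\int_0^A\beta\Phi_1$ by definition of $r^*$), solves the linear problem exactly but vanishes at $a=A$ since $\int_0^A\mu=+\infty$, so it cannot dominate an $L^\infty$ datum near $a=A$. Instead take
$$\tilde y(x,a,t)=Ce^{(\sigma_*+\eta)t}\phi_1(x)\big(\Phi_1(a)+\kappa e^{-\rho a}\big),\qquad\text{with }\eta>0\text{ small},$$
whose parabolic residual computes to $Ce^{(\sigma_*+\eta)t}\phi_1\big[\eta\Phi_1(a)+\kappa e^{-\rho a}\big(r^*+\eta-\rho+\mu(a)\big)\big]$. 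Now (H2') in fact forces $\mu(a)\to+\infty$ as $a\to A^-$; choosing first $\rho$ so large that $\int_0^A\beta(a)e^{-\rho a}\,da\le1$ (this integral decreases to $0$ in $\rho$) and then $\kappa$ small enough (the bracket factor $r^*+\eta-\rho+\mu(a)$ is $\ge0$ near $A$, while $\Phi_1$ is bounded below on compact subsets of $[0,A)$) makes this residual nonnegative; the renewal residual is $Ce^{(\sigma_*+\eta)t}\phi_1\,\kappa\big(1-\int_0^A\beta e^{-\rho a}\,da\big)\ge0$; and taking $C$ large gives $\tilde y(\cdot,\cdot,0)\ge y_0$. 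Since $\mathcal M\ge0$, $\tilde y$ is also a supersolution of the nonlinear system \eqref{4.1} with $u\equiv L$, so $0\le y^L(x,a,t)\le\tilde y(x,a,t)$ by comparison, whence $\|y^L(\cdot,\cdot,t)\|_{L^\infty}=O(e^{(\sigma_*+\eta)t})\to0$ exponentially (alternatively, one may dominate $y^L$ by the linear age--diffusion solution and invoke the spectral theory of its semigroup, whose growth bound is exactly $r^*-\lambda_1^\omega$).

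The routine ingredients are well--posedness, nonnegativity and the comparison principle for \eqref{4.1} --- available because the logistic nonlinearity $y\mapsto\mathcal M(\int_0^Ay\,da)y$ is quasimonotone (competitive), and already used in the cited references --- together with the classical large--time theory of the scalar renewal equation. The two points that need care are: in (i), that the reduced solution genuinely grows, handled by the choice $y_0\equiv1$ together with strict positivity of solutions and by the fact that $\mathcal M(0)=0$ renders the logistic term negligible near extinction; and in (ii), the degeneracy of the sharp profile $\phi_1\Phi_1$ at $a=A$, which I expect to be the main obstacle and which is circumvented by the correction term $\kappa e^{-\rho a}$, exploiting precisely that the hypothesis $\int_0^A\mu=+\infty$ simultaneously forces $\mu$ to blow up at $a=A$.
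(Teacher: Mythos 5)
Your part (i) is correct and takes a genuinely different route from the paper: you integrate out the \emph{spatial} variable against the principal eigenfunction $\phi_1$ of $-d\Delta+\chi_\omega L$, reducing to a scalar renewal equation with Malthusian exponent $r^*-\lambda_1^\omega-\varepsilon$, whereas the paper separates out the \emph{age} variable (choosing $y_0=\tilde h(a)g_0(x)$ with $\tilde h$ the stationary age profile), reduces to a reaction--diffusion equation in $x$ alone with growth rate $r^*$, and then quotes the stabilizability criterion of \cite{AAV}. Your reduction is the dual one and is more self-contained, at the price of invoking the renewal theorem and a positivity argument for the reproductive value of the datum at time $T_\varepsilon$; both are legitimate, and your handling of the logistic term via $\mathcal M(0)=0$ matches what is needed.

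Part (ii), however, has a genuine gap. Your supersolution hinges on the claim that (H2') forces $\mu(a)\to+\infty$ as $a\to A^-$. It does not: a continuous nonnegative $\mu$ with $\int_0^A\mu(a)\,da=+\infty$ may satisfy $\liminf_{a\to A^-}\mu(a)=0$ (e.g.\ a sum of disjoint triangular spikes of height $2^{n+1}$ and width $2^{-n-1}$ accumulating at $A$, separated by intervals where $\mu=0$). Along a sequence $a_n\to A$ with $\mu(a_n)\le M$, your residual $\eta\Phi_1(a_n)+\kappa e^{-\rho a_n}\bigl(r^*+\eta-\rho+\mu(a_n)\bigr)$ is eventually negative once $\rho>r^*+\eta+M$, since $\Phi_1(a_n)\to0$ while the second term stays below the fixed negative constant $-\kappa e^{-\rho A}(\rho-r^*-\eta-M)$; and you cannot simply take $\rho\le r^*+\eta$, because the renewal constraint $\int_0^A\beta(a)e^{-\rho a}\,da\le1$ generally forces $\rho$ above $r^*$ (one has $\int_0^A\beta(a)e^{-r^*a}\,da\ge\int_0^A\beta(a)e^{-\int_0^a\mu-r^*a}\,da=1$). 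In fact no time-independent profile $\phi_1(x)\psi(a)$ with $\psi$ bounded below can serve here for the whole range $\lambda_1^\omega>r^*$: the degeneracy of $\Phi_1$ at $a=A$ that you correctly identified is the true obstruction, and the correction $\kappa e^{-\rho a}$ does not circumvent it. The repair is the route you mention only parenthetically and which the paper actually takes: dominate $y^L$ by the separable but \emph{time-dependent} solution $h(a,t)g(x,t)$ of the linear system, where $h$ solves the normalized McKendrick problem \eqref{4.3} from $h_0\equiv1$ (hence stays bounded and converges to a multiple of $\tilde h$) and $g$ solves $\partial_tg-d\Delta g=r^*g-\chi_\omega Lg$ from the constant datum $\|y_0\|_{\infty}$, decaying like $e^{(r^*-\lambda_1^\omega)t}$.
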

\begin{proof}
(i) Assume that the population is eradicable and let
$$y_0(x,a) = h_0(a)g_0(x),$$
with $h_0\in C([0,A]), h_0(a)>0, \forall a\in[0,A]$, to be specified later, $g_0\in L^{\infty}(\Omega), g_0(x)>0$ a.e in $\Omega$.

Let $u \in L_{loc}^{\infty}(\overline{\omega}\times[0,+\infty))$, $L\geq u(x,t) \geq 0$ a.e., such that
$$\lim\limits_{t\to+\infty}y^u(\cdot,\cdot,t) = 0 \text{ in } L^{\infty}(\Omega\times(0,A)).$$

The unique solution $y^u$ to \eqref{4.1} may be written as
$$y^u(x,a,t) = h(a,t)g(x,t),$$
where $h$ is the solution to
\begin{equation}\left\{ \begin {array}{ll}
\partial _th(a,t)+\partial _ah(a,t)+\mu (a)h(a,t) = -r^*h(a,t), &~ a\in (0,A), \ t\in(0,+\infty) \\
h(0,t)=\int_0^A\beta (a)h(a,t)da,&~ t\in(0,+\infty)\\
h(a,0)=h_0(a), &~ a\in (0,A),
               \end{array}
  \right. \label{4.3}\end{equation}
and $g$ is the solution to
\begin{equation}\left\{ \begin {array}{ll}
\partial _tg(x,t)-d\Delta g(x,t)=r^*g(x,t)\\
\ \ \ \ \ \ \ -\mathcal{M}\left(\int_0^Ah(a,t)g(x,t)da\right)g(x,t)\\
 \ \ \ \ \ \ -\chi_{\omega }(x)u(x,t)g(x,t), &~ x\in\Omega, \ t\in(0,+\infty) \\
\partial _{\nu }g(x,t)=0, &~x\in\partial\Omega, \ t\in(0,+\infty)\\
g(x,0)=g_0(x), &~x\in\Omega.
               \end{array}
  \right. \label{4.4}\end{equation}
It is known that the set of solutions for $\eqref{4.3}_{1-2}$ is a
real vector space of dimension $1$ and there exists a time
independent solution $\tilde{h}$ satisfying $\tilde{h}(a)>0,$ for
all $a\in[0,A)$ (\cite{anita}).

If we consider $h_0 = \tilde{h}$, then
$$y^u(x,a,t) = \tilde{h}(a)g(x,t),\ x\in\Omega, \ a\in (0,A), \ t\in(0,+\infty),$$
where $g$ is the solution to
\begin{equation}\left\{ \begin {array}{ll}
\partial _tg(x,t)-d\Delta g(x,t)=r^*g(x,t)\\
\ \ \ \ \ \ \ -\mathcal{M}\left(Hg(x,t)\right)g(x,t)\\
 \ \ \ \ \ \ -\chi_{\omega }(x)u(x,t)g(x,t), &~ x\in\Omega, \ t\in(0,+\infty) \\
\partial _{\nu }g(x,t)=0, &~x\in\partial\Omega,\  t\in(0,+\infty)\\
g(x,0)=g_0(x), &~x\in\Omega.
               \end{array}
  \right. \label{4.5}\end{equation}
Here $H = \int_0^A\tilde{h}(a)da$.

The eradicability for \eqref{4.1} implies the nonnegative zero-stabilizability for \eqref{4.5}. However, the nonnegative zero-stabilizability for \eqref{4.5} implies that
$$\lambda_1^\omega \geq r^*.$$
This follows as in \cite{AAV} by using of the comparison results for the solutions to parabolic equations.

(ii) If $\lambda_1^\omega > r^*$, then we consider $u(x,t) = L$ a.e. in $\omega \times (0,+\infty)$. Using the comparison result for linear age-structured population dynamics (see \cite{anita}) we get that
\begin{equation}\label{4.6}
y(x,a,t) \leq \tilde{y}(x,a,t) \text{ a.e., }
\end{equation}
where $\tilde{y}$ is the solution to
$$\left\{ \begin {array}{ll}
\partial _ty(x,a,t)+\partial _ay(x,a,t)+\mu (a)y(x,a,t)&~ \\
 \ \ \ \ \ \ -d\Delta y(x,a,t)=-\chi_{\omega }(x)Ly(x,a,t),&~ x\in\Omega, \ a\in (0,A), \ t\in(0,+\infty) \\
\partial _{\nu }y(x,a,t)=0,&~ x\in\partial\Omega, \ a\in (0,A), \ t\in(0,+\infty)\\
y(x,0,t)=\int_0^A\beta (a)y(x,a,t)da,&~ x\in\Omega, \ t\in(0,+\infty)\\
y(x,a,0)=y_0(x,a),&~ x\in\Omega, \ a\in (0,A).
               \end{array}
  \right. $$
Let $h_0(a) = 1, \forall a \in [0,A]$, $g_0(x) = ||y_0||_{\infty}$ a.e. $x\in \Omega$.

Using again the comparison result for linear age-structured population dynamics we get that
\begin{equation}\label{4.7}
\tilde{y}(x,a,t) \leq h(a,t)g(x,t) \text{ a.e., }
\end{equation}
where $h$ is the solution to \eqref{4.3} and $g$ is the solution to \eqref{4.4} corresponding to $ \mathcal{M}\equiv 0$ and $u\equiv L$. Since $h(\cdot,t)\rightarrow \bar{h}$ in $L^{\infty}(0,A)$ as $t\rightarrow +\infty$ (\cite{anita}), and $g(\cdot,t)\rightarrow 0$ in $L^{\infty}(\Omega)$ as $t\rightarrow +\infty$ (because $\lambda_1^\omega > r^*$), we get by \eqref{4.6} and \eqref{4.7} that
$$\lim\limits_{t\to+\infty}y^L(\cdot,\cdot,t) = 0 \text{ in } L^{\infty}(\Omega\times(0,A)),$$
and the conclusion.
\end{proof}
Since our goal was actually to eradicate a pest population corresponding to a initial density $y_0$ with a harvesting effort less or equal than $L$ (tacking into account the above theorem) and since we have however to pay a certain cost to harvest in a subdomain $\omega$, we can consider the following related optimal control problem
$$\operatorname*{\mathit{Minimize}}_{\omega}\int_{0}^{A}\int_{\Omega}y(x,a,T)dx \ da + \alpha \ length(\partial\omega)+\beta \ area(\omega),
$$
where $T>0$ is a certain moment and $y$ is the solution to \eqref{4.1} corresponding to $u\equiv L$.

This problem may be investigated by using the level set method
described in  Section 2 and rewriting it in the following form
$$\operatorname*{\mathit{Minimize}}_{\varphi}\int_{0}^{A}\int_{\Omega}y_{\varphi}(x,a,T)dx \ da + \alpha\int_{\Omega} \delta (\varphi (x)) |\nabla \varphi(x)| dx+\beta\int_{\Omega} H(\varphi (x)) dx,
$$
where $y_{\varphi}$ is the solution to
$$\left\{ \begin {array}{ll}
\partial _ty(x,a,t)+\partial _ay(x,a,t)+\mu (a)y(x,a,t) -d\Delta y(x,a,t)=\\
 \ \ \ \ \ \ -\mathcal{M}\left(\int_0^Ay(x,a,t)da\right)y(x,a,t)\\
 \ \ \ \ \ \ -H(\varphi(x))Ly(x,a,t),~~~~~~~~~~~~~~x\in\Omega, \ a\in (0,A), \ t\in(0,+\infty) \\
\partial _{\nu }y(x,a,t)=0,~~~~~~~~~~~~~~~~~~~~~~~~~~~x\in\partial\Omega, \ a\in (0,A), \ t\in(0,+\infty)\\
y(x,0,t)=\int_0^A\beta (a)y(x,a,t)da, ~~~~~~~~x\in\Omega, \ t\in(0,+\infty)\\
y(x,a,0)=y_0(x,a), ~~~~~~~~~~~~~~~~~~~~~~x\in\Omega, \ a\in (0,A),
               \end{array}
  \right. $$
with $\varphi$ the implicit function of $\omega$. The approach is
similar to the one in Section 2. We will approximate this problem
using the mollified version of the Heaviside function,
$H_{\varepsilon}$, and its derivative by the mollified  function
$\delta_{\varepsilon}$.

Actually, if we denote by
$$\Psi(\varphi) = \operatorname*{\mathit{Minimize}}_{\varphi}\int_{0}^{A}\int_{\Omega}y_{\varphi}(x,a,T)dx \ da + \alpha\int_{\Omega} \delta_{\varepsilon} (\varphi (x)) |\nabla \varphi(x)| dx+\beta\int_{\Omega} H_{\varepsilon}(\varphi (x)) dx,$$
for a small but fixed $\varepsilon >0$, the harvesting problem to be investigated is
$$\underset{\varphi} {Minimize} \ \Psi(\varphi ), $$
where $\varphi:\overline{\Omega}\longrightarrow\mathbb{R}$ is a smooth function and $y_{\varphi}$ is the solution to
$$\left\{ \begin {array}{ll}
\partial _ty(x,a,t)+\partial _ay(x,a,t)+\mu (a)y(x,a,t) -d\Delta y(x,a,t)=\\
 \ \ \ \ \ \ -\mathcal{M}\left(\int_0^Ay(x,a,t)da\right)y(x,a,t)\\
 \ \ \ \ \ \ -H_{\varepsilon}(\varphi(x))Ly(x,a,t),~~~~~~~~~~~~~~x\in\Omega, \ a\in (0,A), \ t\in(0,+\infty) \\
\partial _{\nu }y(x,a,t)=0,~~~~~~~~~~~~~~~~~~~~~~~~~~~x\in\partial\Omega, \ a\in (0,A), \ t\in(0,+\infty)\\
y(x,0,t)=\int_0^A\beta (a)y(x,a,t)da, ~~~~~~~~x\in\Omega, \ t\in(0,+\infty)\\
y(x,a,0)=y_0(x,a), ~~~~~~~~~~~~~~~~~~~~~~x\in\Omega, \ a\in (0,A).
               \end{array}
  \right. $$
By  following the same lines as in Section 2 we can get the
directional derivative of $\Psi$. We reach a similar conclusion as
in Section 2 concerning the gradient descent with respect to
$\varphi $:
$$\left\{ \begin{array}{ll}
      \partial_{\theta } \varphi(x,\theta ) = \delta_{\varepsilon}(\varphi (x,\theta ))[-\alpha\; \text{\rm div}
      \left( \frac{\nabla \varphi(x,\theta )}{|\nabla \varphi(x, \theta )|}\right)+\beta&\\
 \ \ \ \ \ \ \  \ \ \ \ \ \ \     -L\int_0^A\int_0^T r(a,x,t)y_{\varphi}(a,x,t)da\ dt ], & x \in \Omega , \ \theta >0 \\
\frac{\delta_{\varepsilon}(\varphi(x,\theta ))}{|\nabla \varphi(x,\theta )|}\partial_{\nu} \varphi (x,\theta )=0, &  x\in \partial\Omega , \ \theta >0 .
           \end{array} \right. $$
where $\theta$ is an artificial time and
$r$ is solution to
$$\left\{ \begin {array}{ll}
\partial _tr(x,a,t)+\partial _ar(x,a,t)-\mu (a)r(x,a,t) +d\Delta r(x,a,t)=\\
 \ \ \ \ \ \ ~~\mathcal{M'}\left(\int_0^Ay(x,a,t)da\right)\int_0^Ar(x,a,t)y(x,a,t)da\\
 \ \ \ \ \ \ +\mathcal{M}\left(\int_0^Ay(x,a,t)da\right)r(x,a,t)+LH_{\varepsilon}(\varphi(x))r(x,a,t)\\
 \ \ \ \ \ \ -\beta(a)r(x,0,t),~~~~~~~~~~~~~~x\in\Omega, \ a\in (0,A), \ t\in(0,+\infty) \\
\partial _{\nu }r(x,a,t)=0,~~~~~~~~~~~~~~~~~~~~x\in\partial\Omega, \ a\in (0,A), \ t\in(0,+\infty)\\
r(x,A,t)=0, ~~~~~~~~~~~~~~~~~~~~~~x\in\Omega, \ t\in(0,+\infty)\\
r(x,a,T)=1, ~~~~~~~~~~~~~~~~~~~~~~x\in\Omega, \ a\in (0,A).
               \end{array}
  \right. $$
\section*{Acknowledgements}
This work was supported by the CNCS-UEFISCDI (Romanian National Authority for Scientific Research) grant
68/2.09.2013, PN-II-ID-PCE-2012-4-0270: ``Optimal Control and Stabilization of Nonlinear Parabolic Systems with State Constraints. Applications in Life Sciences and Economics''.

\end{document}